\newtheorem{theorem}{Theorem}[section]
\newtheorem{lem}[theorem]{Lemma}
\numberwithin{equation}{section}
\newcommand{\e}{\varepsilon}
\renewcommand{\o}{\omega}
\renewcommand{\O}{\Omega}
\renewcommand{\a}{\alpha}
\renewcommand{\P}{\mathbb{P}}
\newcommand{\N}{\mathbb{N}}
\renewcommand{\leq}{\leqslant}
\renewcommand{\geq}{\geqslant}
\DeclareMathOperator{\R}{Re}
\DeclareMathOperator{\res}{res}
\renewcommand{\pmod}[1]{\allowbreak\mkern7mu({\operator@font mod}\,\,#1)}
\newcommand{\be}{\begin{equation}}
\newcommand{\ee}{\end{equation}}
\renewcommand{\a}{\ensuremath{\alpha}}
\renewcommand{\le}{\leqslant}
\renewcommand{\leq}{\leqslant}
\renewcommand{\geq}{\geqslant}
\begin{document}

\title{Numbers of the form $kf(k)$}
\author{Mikhail R. Gabdullin, Vitalii V. Iudelevich, and Florian Luca}
\date{}

\address{Steklov Mathematical Institute,
	Gubkina str., 8, Moscow, Russia, 119991}
\email{gabdullin.mikhail@yandex.ru}

\address{Moscow State University, Leninskie Gory str., 1, Moscow, 119991}
\email{vitaliiyudelevich@mail.ru} 

\address{School of Maths, Wits University, Private Bag 3, Wits 2050, South Africa, Research Group in Algebraic Structures and Applications, King Abdulaziz University, Jeddah, Saudi Arabia and Centro de Ciencias Matem\'aticas UNAM, Morelia, Mexico}
\email{florian.luca@wits.ac.za}

\begin{abstract}
For a function $f\colon \N\to\N$, define $N^{\times}_{f}(x)=\#\{n\leq x: n=kf(k) \mbox{ for some $k$} \}$. Let $\tau(n)=\sum_{d|n}1$ be the divisor function, $\omega(n)=\sum_{p|n}1$ be the prime divisor function, and $\varphi(n)=\#\{1\leq k\leq n: (k,n)=1 \}$ be Euler's totient function. We prove that

\begin{gather*} 
\!\!\!\!\!\!\!\!\!\!\!\!\!\!\!\!\!\!\!\!\!\!\! 1) \quad N^{\times}_{\tau}(x) \asymp \frac{x}{(\log x)^{1/2}}; \\
2) \quad N^{\times}_{\omega}(x) = (1+o(1))\frac{x}{\log\log x}; \\
\!\!\!\!\!\!\!\!\! 3) \quad N^{\times}_{\varphi}(x) = (c_0+o(1))x^{1/2}, 
\end{gather*} 	
where $c_0=1.365...$\,.
\end{abstract}

\date{\today}
\maketitle

%%%%%%%%%%%%%%%%

\section{Introduction}

Counting functions of various  sequences of positive integers have been extensively studied in number theory. A special case of great interest is that in which the sequence is the image of some arithmetic function. It is an easy consequence of the prime number theorem that a number $n\leq x$ can have at most $(1+o(1))\frac{\log x}{\log\log x}$ prime factors; therefore, if we denote by $\omega(n)=\sum_{p|n}1$ the number of prime divisors of $n$, we get
$$
\#\{\o(n): n\leq x \}=(1+o(1))\frac{\log x}{\log\log x}.
$$
The question becomes much more complex if we consider the divisor function $\tau(n)=\sum_{d|n}1$. In 1951, Erd\H{o}s and Mirsky \cite{EM} proved that
$$
\#\{\tau(n): n\leq x \} = \exp\left(\left(\frac{2\pi\sqrt2}{\sqrt3}+o(1)\right)\frac{(\log x)^{1/2}}{\log\log x} \right), 
$$
while it is not hard to see that $\max_{n\leq x}\tau(n)=\exp((\log2+o(1))\frac{\log x}{\log\log x})$ (see, for example, \cite{Ten}, Theorem I.5.4). Many papers were also devoted to the study of totients, that is, the numbers which are values of Euler's totient function $\varphi(n)=\#\{1\leq k\leq n: (k,n)=1 \}$. We just mention that Maier and Pomerance \cite{MP} (see this paper for the history of the question and references as well) in 1988 showed that
$$
\#\{\varphi(n): n\leq x \} = \frac{x}{\log x}\exp((C+o(1))(\log\log\log x)^2),
$$
and that the exact order of magnitude of the quantity $\#\{\varphi(n): n\leq x \}$ was found by Ford \cite{F} in 1998.

In the present paper we study counting functions of sequences of positive integers with the following special multiplicative structure. For a function $f\colon \N\to\N$, define 
$$
N^{\times}_{f}(x)=\#\{n\leq x: n=kf(k) \mbox{ for some $k$} \}.
$$ 
Note that all of the functions $\tau$, $\omega$, $\varphi$ have large typical values: all but $o(x)$ numbers $k\leq x$ have $\o(k)\asymp \log\log x$ (due to Hardy and Ramanujan \cite{HR}); hence, $\tau(n)\geq 2^{\o(n)}$ is usually also large, and, finally, $\varphi(k)\gg k/\log\log k$ for all $k$. Then it is easy to see that the corresponding counting functions $N^{\times}_{f}(x)$ for these $f$ are $o(x)$. A very natural question arises: what are their orders of magnitude? 

We give the answers for all of three mentioned choices of $f$. Firstly, we find the exact order of magnitude of $N^{\times}_{\tau}(x)$.

\begin{theorem}\label{th1.1}
We have
$$
N^{\times}_{\tau}(x) \asymp \frac{x}{(\log x)^{1/2}}.
$$ 	
\end{theorem}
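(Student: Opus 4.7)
The plan is to establish matching bounds $N_{\tau}^{\times}(x) \asymp x/\sqrt{\log x}$, using a trivial majorant for the upper bound and an explicit injective sub-family of $\{k\}$ for the lower bound.

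For the upper bound, every $n$ counted by $N_{\tau}^{\times}(x)$ arises as $n = k\tau(k)$ for at least one $k$, so
\[
N_{\tau}^{\times}(x) \leq M(x) := \#\{k \geq 1 : k\tau(k) \leq x\}.
\]
Since $k \mapsto k\tau(k)$ is multiplicative, the generating Dirichlet series admits the Euler product $A(s) := \sum_{k \geq 1}(k\tau(k))^{-s} = \prod_p \sum_{j \geq 0} p^{-js}(j+1)^{-s}$. At $s = 1$ the local factor equals $-p\log(1-1/p) = 1 + \frac{1}{2p} + O(p^{-2})$, matching the expansion of $(1-1/p)^{-1/2}$ up to $O(p^{-2})$; hence $A(s) = \zeta(s)^{1/2} H(s)$ with $H$ holomorphic and non-vanishing in a neighbourhood of $s = 1$. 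A standard application of the Selberg--Delange method then gives $M(x) \ll x/\sqrt{\log x}$.

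For the lower bound, I restrict to odd squarefree $k$. For such $k$ we have $\tau(k) = 2^{\omega(k)}$; since $k$ is odd, the $2$-adic valuation of $k\tau(k) = 2^{\omega(k)}k$ equals $\omega(k)$, so the map $k \mapsto k\tau(k)$ is injective on this family. Therefore
\[
N_{\tau}^{\times}(x) \geq \sum_{r \geq 1} \#\{k \leq x/2^r : k \text{ odd squarefree},\ \omega(k) = r\}.
\]
By Landau's theorem, in a form uniform for $r \leq C\log\log x$ (e.g.\ Sathe--Selberg), each summand is $\gg \frac{x(\log\log x)^{r-1}}{2^r (r-1)!\log x}$. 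Summing the resulting truncated Poisson--type series,
\[
\frac{x}{\log x}\sum_{r \geq 1} \frac{(\log\log x)^{r-1}}{2^r(r-1)!} = \frac{x}{2\log x}\,e^{(\log\log x)/2} = \frac{x}{2\sqrt{\log x}},
\]
which yields the matching lower bound.

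The main obstacle is the analytic input for the upper bound: while the $\zeta^{1/2}$-factorisation is a short formal computation, deducing $M(x) \ll x/\sqrt{\log x}$ requires Selberg--Delange (with an attendant zero-free region for $\zeta$), or an elementary alternative in which $M(x)$ is dissected according to $\omega(k)$ and bounded using the uniform Hardy--Ramanujan estimate $\#\{k \leq y : \omega(k) = r\} \ll \frac{y(\log\log y + C)^{r-1}}{(r-1)!\log y}$. For the lower bound the only subtle point is uniformity of Landau's asymptotic in $r$, handled by truncation at $r \leq C\log\log x$; the tail is negligible because of the factorial.
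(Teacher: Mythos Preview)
Your argument is correct. The lower bound is essentially the paper's: both inject squarefree $k$ and sum the Sathe--Selberg estimates over $\omega(k)=r$ to obtain the Poisson-type series with value $\asymp x/\sqrt{\log x}$. (Your ``odd'' restriction is unnecessary---for any two distinct squarefree $k_1,k_2$ one checks $k_1\tau(k_1)\neq k_2\tau(k_2)$ by comparing odd parts and $2$-adic valuations---but it does no harm.)

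The upper bound is where you diverge. The paper does \emph{not} invoke Selberg--Delange; instead it uses the elementary inequality $\tau(k)\ge 2^{\omega(k)}$ to get
\[
N_\tau^\times(x)\le\sum_{l\ge 1}\pi_l(x/2^l),
\]
and then bounds the sum using the uniform Sathe--Selberg asymptotics for $\pi_l$ together with large-deviation tail estimates for the extreme values of $l$. This is exactly the ``elementary alternative'' you sketch at the end. Your primary route---factoring $\sum_k (k\tau(k))^{-s}=\zeta(s)^{1/2}H(s)$ and applying Selberg--Delange---is the approach of Balasubramanian--Ramachandra, which the paper cites as a related result but does not reproduce. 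The trade-off is clear: your method gives more (an asymptotic for $M(x)$, not just an upper bound) at the cost of heavier analytic machinery; the paper's approach stays within the Sathe--Selberg circle of ideas already needed for the lower bound, keeping the proof self-contained.
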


Note that the map $k\mapsto k\tau(k)$ is not injective: we have $18\tau(18)=27\tau(27)$, and therefore, $18k\tau(18k)=27k\tau(27k)$ whenever $(k,6)=1$. Thus (as one can easily see from the proof of the lower bound) there is a positive proportion of the representable numbers which have at least two representations. This circumstance makes us think that it should be hard to find the asymptotics for $N^{\times}_{\tau}(x)$. However, our approach is well suited in the following cases where the map $k\mapsto kf(k)$ is an injection. Let $A\geq2$ be a fixed integer, and  define $f_1(n)=A^{\omega(n)}$ and $f_2(n)=A^{\Omega(n)}$ (here $\O(n)=\sum_{p^{\a}|n}1$ is the number of prime factors of $n$ counted with multiplicity); then, making some technical changes in the proof of Theorem \ref{th1.1} (and using Theorem 6.5 of \cite{Ten} in the case of $f_2(n)$), one can show that 
$$
N^{\times}_{f_i}(x)=(c_i(A)+o(1))\frac{x}{(\log x)^{1-1/A}},  \quad i=1,2,
$$  
where 
$$
c_1(A)=\frac{1}{\Gamma(1/A)}\prod_p\left(1+\frac{1}{A(p-1)}\right)\left(1-\frac1p\right)^{1/A},
$$
and
$$
c_2(A)=\frac{1}{\Gamma(1/A)}\prod_p\left(1-\frac{1}{Ap}\right)^{-1}\left(1-\frac1p\right)^{1/A}.
$$
Related results appear in \cite{BR} and \cite{LS}. In \cite{BR}, it is shown that if $\mu>0$ and $g(n)$ is a positive multiplicative function such that $g(n)\gg n^{-1/16}$ and $g(p)=1/\mu$ holds for all primes $p$, then there exists a positive constant $C$ (depending on $g$) such that
$$
\sum_{kg(k)\le x} 1=C x(\log x)^{\mu-1}+O_{\mu}(x\exp(-C(\log x)^{3/5} (\log\log x)^{-1/5})).
$$
In particular, the above result applies to $g(n)=\tau(n)$ with $\mu=1/2$ and to $g(n)=A^{\omega(n)}$ and $A^{\Omega(n)}$ with $\mu=1/A$. 
The paper \cite{LS} gives the order of magnitude of the counting function of the set of positive integers $n$ which are divisible by $A^{\omega(n)}$. 

\medskip 

Secondly, we study the case $f(n)=\o(n)$.

\begin{theorem}\label{th1.2}
We have
$$
N^{\times}_{\omega}(x) = \frac{x}{\log\log x} + O\left(\frac{x(\log\log\log x)^{1/2}(\log\log\log\log x)^2}{(\log\log x)^{3/2}}\right).
$$ 	
\end{theorem}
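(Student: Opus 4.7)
Throughout, write $L = \log\log x$ and set $\mathcal{S} = \{k \in \N : k\omega(k) \le x\}$; the map $\pi(k) := k\omega(k)$ sends $\mathcal{S}$ into $[1,x]$ and $N_\omega^\times(x) = |\pi(\mathcal{S})|$. Because $k \mapsto km$ is injective on any fixed level set $\{k : \omega(k) = m\}$, any collision $\pi(k_1) = \pi(k_2)$ with $k_1 \ne k_2$ must have $\omega(k_1) \ne \omega(k_2)$, and consequently
$$
0 \le |\mathcal{S}| - N_\omega^\times(x) \le \mathcal{C},
$$
where $\mathcal{C}$ is the number of unordered pairs $\{k_1, k_2\} \subset \mathcal{S}$ with $\pi(k_1) = \pi(k_2)$ and $k_1 \ne k_2$. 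The plan is to prove, with $E(x) = x(\log\log\log x)^{1/2}(\log\log\log\log x)^{2}/L^{3/2}$, that $|\mathcal{S}| = x/L + O(E(x))$ and $\mathcal{C} = O(E(x))$ separately.

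For the first estimate I would write $|\mathcal{S}| = \sum_{m \ge 1} \pi_m(x/m)$ with $\pi_m(y) := \#\{n \le y : \omega(n) = m\}$, and use the uniform Sathe--Selberg / Landau asymptotic
$$
\pi_m(y) = \frac{y}{\log y}\,\frac{(\log\log y)^{m-1}}{(m-1)!}\,\bigl(F((m-1)/\log\log y) + O(1/\log\log y)\bigr),
$$
valid for $m$ in a moderate window around $\log\log y$. Choosing a truncation radius $T$ around $L$, Stirling converts the contribution of $|m - L| \le T$ into a discrete Gaussian of mean $L$ and variance $\sim L$ in the variable $m$; the symmetry of this Gaussian kills the first-order term in the expansion $1/m = 1/L - (m-L)/L^{2}+ \ldots$, leaving $x/L$ plus a second-order residual whose size is controlled by $T/L^{2}$ times polylog. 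The contribution of $m \notin [L-T, L+T]$ is bounded by the classical Hardy--Ramanujan large-deviation inequality $\ll x\,e^{-cT^{2}/L}$; choosing $T$ of order $\sqrt{L\log\log\log x}\,(\log\log\log\log x)^{2}$ balances these contributions at $O(E(x))$.

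For the collision count I parametrize as follows: if $k_1 < k_2$ and $k_1 m_1 = k_2 m_2$ with $m_i = \omega(k_i)$, then $m_1 > m_2$; set $d = \gcd(m_1, m_2)$, $a = m_1/d$, $b = m_2/d$, so $\gcd(a,b) = 1$ and $a > b$, and there is a unique $s \in \N$ with $k_1 = bs$, $k_2 = as$. The defining conditions become $\omega(bs) = da$, $\omega(as) = db$, and $dabs \le x$. From the identity $\omega(bs) - \omega(as) = \omega(b) - \omega(a) + \omega(\gcd(a,s)) - \omega(\gcd(b,s))$, the two equations force $d(a-b) = \omega(b) - \omega(a) + O(\omega(ab)) = O(\log L)$, so that $a$ and $b$ must be unusually close. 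After discarding via the tail bound above the contribution of pairs with some $m_i \notin [L-T, L+T]$, I would fix $(d,a,b)$ in the typical regime and apply Hardy--Ramanujan to the count of admissible $s \le x/(dab)$ (for which $\omega(s)$ is determined up to $O(\log L)$), obtaining $\ll x/(dab\,\sqrt{L})$ times a polylog factor; summing over the severely constrained family of typical triples $(d,a,b)$ then yields $\mathcal{C} = O(E(x))$.

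The principal obstacle is the collision estimate: the naive bound on the number of admissible $s$ (ignoring the $\omega$-constraints) already sums over typical $(d,a,b)$ to something of order $x(\log L)^{2}/L$, orders of magnitude larger than $E(x)$. The required saving comes from two sources---the concentration of $\omega(s)$ near $L$, contributing a factor $1/\sqrt{L}$ via Hardy--Ramanujan, and the algebraic restriction $d(a-b) = \omega(b) - \omega(a) + O(\omega(ab))$, which forces $|a-b|$ to be tiny---and making both quantitatively precise, then optimizing the window parameter $T$, is what produces the exponents $(\log\log\log x)^{1/2}(\log\log\log\log x)^{2}$ in the error term.
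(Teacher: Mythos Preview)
Your plan has the right architecture --- split into a size estimate for the preimage set and a collision count, parametrize collisions via $\gcd$'s --- and this matches the paper. But you are working considerably harder than necessary in both halves, and you slightly misattribute the origin of the $(\log_4 x)^2$ factor.

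For the size of $\mathcal{S}$, the paper avoids Sathe--Selberg entirely. After restricting to the ``typical'' set $K=\{k: k\omega(k)\le x,\ |\omega(k)-L|\le\psi\sqrt{L}\}$ with $\psi=10(\log_3 x)^{1/2}$ (the complement is $O(x/L^2)$ by the tail bound), one simply observes that every $k\in K$ satisfies $x/(L+\psi\sqrt{L})\le k\le x/(L-\psi\sqrt{L})$, hence $|K|=x/L+O(x\psi/L^{3/2})$. No Gaussian approximation, no symmetry cancellation --- just sandwiching.

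For collisions, your parametrization is exactly the paper's (your $s$ is the paper's $d=(k,k')$, your $d$ is the paper's $t=(\omega(k),\omega(k'))$). The key divergence is that the paper does \emph{not} invoke Hardy--Ramanujan on the $s$-count. Instead, it sharpens your bound $|m_1-m_2|=O(\log L)$ to the pointwise inequality $|m_1-m_2|\le\omega(\omega(k))=\omega(w)$ where $w=\omega(k)$. Then for each fixed $w$ in the window one has at most $\omega(w)$ choices for $t$, at most $\omega(w)/t$ choices for $u=m_2-m_1$, and $\ll xt/L^2$ choices for $s$, giving $\ll x\omega(w)^2/L^2$; summing over $w$ in the window of length $\asymp\psi\sqrt{L}$ and using the elementary second-moment bound $\sum_{a\le w\le b}\omega(w)^2\ll(b-a)(\log_4 x)^2$ yields $\ll x\psi(\log_4 x)^2/L^{3/2}=E(x)$ directly. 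So the $(\log_4 x)^2$ comes from averaging $\omega(w)^2$ over $w\sim L$, not from tuning the window width, and the extra $(\log_4 x)^2$ you inserted into $T$ is not needed.

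Your Hardy--Ramanujan saving on $s$ would work and in fact overshoots (it gains an extra $L^{-1/2}$ beyond what is required), but it is an unnecessary detour; the paper's argument is shorter and pinpoints the error term cleanly.
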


The map $k\mapsto k\o(k)$ is also not injective: for any prime $q\geq5$, we have $18q=9q\o(9q)=6q\o(6q)$. However, it turns out be very close to injective one, in the sense that the number of pairs $(k_1,k_2)$ with $k_1\neq k_2$ and $k_1\o(k_1)=k_2\o(k_2)$ is relatively small. This allows us to find the asymptotics for $N^{\times}_{\o}$.

The proof of this theorem can be easily adopted to include the case $f(n)=\O(n)$: we have the same asymptotics for $N^{\times}_{\Omega}$ as well.

Finally, we study the case $f=\varphi$. It turns out that the map $n\mapsto n\varphi(n)$ is an injection (see Section \ref{s4} for the details), and thus clearly $N^{\times}_{\varphi}(x)\geq \lfloor x^{1/2} \rfloor$. On the other hand, it is well-known that $\sum_{n\leq x}n/\varphi(n)\ll x$ (see, for example, [Mur], Exercise 4.4.12), and therefore for any $\e>0$ Markov's inequality implies that $\#\{n\leq x: \varphi(n)/n\leq \e \} \ll \e x$, which gives  
\begin{multline}\label{1.1}
N^{\times}_{\varphi}(x) \leq x^{1/2}+\sum_{j\geq0}\#\{n\in[2^jx^{1/2},2^{j+1}x^{1/2}]: \varphi(n)\leq 2^{-j}x^{1/2} \} \\
\leq x^{1/2}+\sum_{j\geq0}\#\{n\leq 2^{j+1}x^{1/2}: \frac{\varphi(n)}{n}\leq 4^{-j} \}
\ll x^{1/2}+\sum_{j\geq0}x^{1/2}2^{-j} \ll x^{1/2}.
\end{multline}
So, we see that $N^{\times}_{\varphi}(x)\asymp x^{1/2}$. The asymptotic behaviour of $N^{\times}_{\varphi}(x)$ is given in the following theorem.

\begin{theorem}\label{th1.3}
We have
$$
N^{\times}_{\varphi}(x) = c_0x^{1/2}+O(x^{1/2}\exp(-c\sqrt{\log x\log\log x})),
$$ 	
where $c_0=\prod_p\left(1+\frac{1}{p\,\left(p-1+\sqrt{p^2-p}\right)}\right)=1.365...$ and $c>0$ is an absolute constant.
\end{theorem}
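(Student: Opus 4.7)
My plan is a Perron-type contour integration applied to the Dirichlet series of $n\varphi(n)$.

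\emph{Reduction to a counting problem.} By the injectivity of the map $n\mapsto n\varphi(n)$ shown in Section~\ref{s4},
$$
N_\varphi^{\times}(x)=\#\{n\in\N\colon n\varphi(n)\le x\}.
$$

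\emph{The Dirichlet series and its factorisation.} Since $n\varphi(n)$ is multiplicative with value $p^{2k-1}(p-1)$ at $p^k$ for $k\ge1$, summing the local geometric series yields
$$
F(s):=\sum_{n\ge1}\frac{1}{(n\varphi(n))^s}
=\prod_p\left(1+\frac{(p(p-1))^{-s}}{1-p^{-2s}}\right)
=\zeta(2s)\,G(s),
$$
where
$$
G(s):=\prod_p\bigl(1-p^{-2s}+(p(p-1))^{-s}\bigr).
$$
The elementary expansion $(p(p-1))^{-s}-p^{-2s}=sp^{-2s-1}+O(p^{-2s-2})$ implies that $G(s)$ is holomorphic and non-vanishing on a half-plane $\Re s>1/2-\eta$ for some fixed $\eta>0$, with polynomial growth on vertical lines. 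Evaluating at $s=1/2$ and rationalising each local factor identifies the residue constant:
$$
G(1/2)=\prod_p\left(1-\frac{1}{p}+\frac{1}{\sqrt{p(p-1)}}\right)
=\prod_p\left(1+\frac{1}{p(p-1+\sqrt{p^2-p})}\right)=c_0.
$$

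\emph{Perron's formula and contour shift.} I would apply the truncated Perron formula on the line $\Re s=1/2+1/\log x$ with some truncation height $T$, and then shift the contour leftward, past the simple pole of $\zeta(2s)$ at $s=1/2$, to the line $\Re s=1/2-\delta$. The pole contributes the residue
$$
\operatorname{Res}_{s=1/2}\frac{F(s)x^s}{s}=G(1/2)\,x^{1/2}=c_0\,x^{1/2},
$$
which is the main term. The new vertical integral and the two horizontal segments at height $\pm T$ are to be bounded using a classical zero-free region for $\zeta$, standard convexity estimates for $\zeta(2s)$ in the critical strip, and the controlled growth of $G(s)$ in the strip where it is holomorphic. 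An appropriate choice of $T$ and $\delta$ then produces the stated error term $O(x^{1/2}\exp(-c\sqrt{\log x\log\log x}))$.

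\emph{Main obstacle.} The technically delicate step is to match exactly the error $\exp(-c\sqrt{\log x\log\log x})$: a purely convexity-based argument yields only $x^{1/2+o(1)}$, so the zero-free region must genuinely be exploited and the contour parameters carefully calibrated. The horizontal portions of the contour, where the zero-free region is used, give the dominant contribution to the error and require the most care; simultaneously one has to verify that the $G$-factor does not spoil the saving obtained from $\zeta(2s)$. Establishing the analytic continuation and polynomial growth of $G(s)$ inside the shifted strip is the other piece of bookkeeping that must be carried out cleanly.
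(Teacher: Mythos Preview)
Your overall plan---factor $F(s)=\zeta(2s)G(s)$, apply Perron, shift past the pole at $s=1/2$---is exactly the paper's. The constant is correctly identified. But the analytic claim you make about $G$ is wrong, and this is precisely the point that drives the shape of the error term.

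You assert that $G(s)=\prod_p\bigl(1-p^{-2s}+(p(p-1))^{-s}\bigr)$ has polynomial growth on vertical lines in a \emph{fixed} strip $\Re s>1/2-\eta$. It does not. Writing the local factor as $1+b_p(s)$ with $|b_p(s)|\ll\min\{p^{-2\sigma},|s|p^{-1-2\sigma}\}$, on the line $\sigma=1/2-\delta$ one gets
\[
\log|G(s)|\ \le\ \sum_{p\le|t|}p^{-1+2\delta}+|t|\sum_{p>|t|}p^{-2+2\delta}\ \asymp\ \frac{|t|^{2\delta}}{\delta\log|t|},
\]
so $|G(\sigma+it)|$ can be as large as $\exp\bigl(c\,|t|^{2\delta}\bigr)$, which defeats any shift to a fixed line left of $1/2$. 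The paper therefore does \emph{not} move to a fixed abscissa: it takes $a=\tfrac12-\frac{\log\log T}{5\log T}$, so that on the contour $|G(s)|\le\exp(O((\log T)^{1/2}))$, and combines this with the bound $\zeta(\sigma+it)\ll(\log|t|)^{2/3}$ valid for $\sigma\ge 1-\gamma_1(\log|t|)^{-2/3}$. Balancing the Perron truncation error $x^{3/2}(\log x)/T$ against $x^{1+a}$ with this moving $a$ is what produces $T=\exp(c\sqrt{\log x\log\log x})$ and hence the stated saving. So the limitation is the growth of $G$, not a zero-free region, and your diagnosis that ``the horizontal portions \ldots give the dominant contribution'' is inverted: in the paper they are $\ll x^{3/2}/T$ and it is the vertical segment at $\Re s=a$ (together with the truncation) that governs the error.

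One smaller difference: the paper applies the weighted Perron formula with kernel $x^{s+1}/(s(s+1))$ to obtain an asymptotic for $\int_1^x N_\varphi^\times(u)\,du$ and then recovers $N_\varphi^\times(x)$ by the standard differencing argument. This makes the vertical integral absolutely convergent without any subtlety. Your first-order Perron approach can be made to work too, but you would still need the moving-contour bound on $G$ described above; the claimed polynomial growth is not available.
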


It is worth mentioning that this last problem is very close to counting totient numbers up to $x$ with multiplicity. Let $r(n)=\#\{m\in \N: n=\varphi(m)\}$; in 1972, Bateman \cite{Bat} showed that
$$
\#\{n\in\N: \varphi(n)\leq x\} = \sum_{n\leq x}r(n)=\frac{\zeta(2)\zeta(3)}{\zeta(6)}x+O\left(x\exp(-c\sqrt{\log x\log\log x})\right)
$$
(here and in what follows $c$ stands for an absolute positive constant which may vary from line to line), and Balazard and Tenenbaum \cite{BT} in 1998 improved the error term to 
$$
O\left(x\exp(-c(\log x)^{3/5}(\log\log x)^{-1/5})\right),
$$
which is also the best known error term in the prime number theorem, due to Korobov \cite{Kor} and Vinogradov \cite{Vin}. It is very likely that the machinery of \cite{BT} may allow us to get the error term of the same shape in Theorem \ref{th1.3}, but we wanted to keep the paper short and self-contained, and thus decided to use a simpler argument which gives our result.

\medskip 

In Section \ref{s2}, we prove Theorem \ref{th1.1}; the main ingredients here are the asymptotics for the number of positive integers $k\leq x$ with a given value of $\o(k)$
and the asymptotics for the number of such square-free positive integers. Section \ref{s3} is devoted to the proof of Theorem \ref{th1.2}, which relies on the fact that the values $k\omega(k)$ are usually distinct for typical $k\leq x/\log\log x$. In Section \ref{s4}, we use the method of contour integration to prove Theorem \ref{th1.3}.

\bigskip 

\textbf{Notation.}  We use Vinogradov's $\ll$ notation: $F\ll G$ (as well as $F=O(G)$ and $G\gg F$) means that there exists an absolute constant $C>0$ such that $|F|\leq CG$; also we write $F\asymp G$ if $G\ll F\ll G$. We use $\lfloor u\rfloor$ to denote the largest integer not exceeding $u$, and we let $(a,b)$ be the greatest common divisor of integers $a$ and $b$.

\bigskip 

\textbf{Acknowledgements}. The authors thank Kevin Ford and Sergei Konyagin for helpful comments. Mikhail Gabdullin is supported in part by Young Russian Mathematics award. The work of Vitalii Iudelevich was supported by the Theoretical Physics and Mathematics Advancement Foundation  ``BASIS''.

\section{Proof of Theorem \ref{th1.1}}\label{s2}

We will need the following estimates.

\begin{lem}\label{lem2.1}
Let $Q(\a)=\a\log\a-\a+1$ and $\a_0>1$. Then
$$
\#\{n\leq x: \o(n)\leq \a\log\log x\} \ll x(\log x)^{-Q(\a)}
$$
for any $\a\in(0,1)$, and
$$
\#\{n\leq x: \o(n)\geq \a\log\log x\} \ll_{\a_0} x(\log x)^{-Q(\a)}
$$
for any $\a\in(1,\a_0]$.
\end{lem}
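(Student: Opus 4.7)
The plan is to apply Rankin's exponential-tilting trick to the multiplicative function $n\mapsto z^{\omega(n)}$. The crucial ingredient is the classical Selberg--Delange (equivalently Hall--Tenenbaum) estimate
\[
S_z(x):=\sum_{n\le x} z^{\omega(n)} \ll x(\log x)^{z-1},
\]
valid for every $z>0$, with the implicit constant locally uniform in $z$. This may alternatively be derived by summing the Hardy--Ramanujan bound
\[
\pi_k(x):=\#\{n\le x:\omega(n)=k\}\ll \frac{x}{\log x}\cdot\frac{(\log\log x + c)^{k-1}}{(k-1)!}
\]
against the series defining $\exp(z\log\log x)$.

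For the first bound, fix $\alpha\in(0,1)$ and choose $z\in(0,1)$. If $\omega(n)\le\alpha\log\log x$ then, since $z<1$, we have $z^{\omega(n)}\ge z^{\alpha\log\log x}=(\log x)^{\alpha\log z}$. Hence
\[
\#\{n\le x:\omega(n)\le\alpha\log\log x\} \le (\log x)^{-\alpha\log z}\, S_z(x) \ll x(\log x)^{z-1-\alpha\log z}.
\]
The exponent $h(z):=z-1-\alpha\log z$ satisfies $h'(z)=1-\alpha/z$, so on $(0,1)$ it is minimised at $z=\alpha$, where $h(\alpha)=\alpha-1-\alpha\log\alpha=-Q(\alpha)$. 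This yields the first estimate.

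For the second bound, fix $\alpha\in(1,\alpha_0]$ and take $z>1$. Now $\omega(n)\ge\alpha\log\log x$ gives $z^{\omega(n)}\ge(\log x)^{\alpha\log z}$, and the analogous Markov-type inequality yields
\[
\#\{n\le x:\omega(n)\ge\alpha\log\log x\} \ll x(\log x)^{z-1-\alpha\log z}.
\]
Choosing $z=\alpha\in(1,\alpha_0]$ again gives the exponent $-Q(\alpha)$; the hypothesis $\alpha\le\alpha_0$ keeps $z$ in a fixed compact subset of $(1,\infty)$, so the implied constant depends only on $\alpha_0$.

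The main difficulty is securing the moment bound for $S_z(x)$ with locally uniform constants; this is standard but not entirely trivial, and I would simply cite Tenenbaum's book (Chapter II.6) rather than reprove it here. Once $S_z$ is available, the remainder reduces to the one-line convex-optimisation argument above. The sharp exponent $Q(\alpha)$ is precisely the Cram\'er rate function for the Poissonian heuristic $\omega(n)\approx\operatorname{Poisson}(\log\log x)$, which is why Rankin's tilt is sharp at this (exponential) scale.
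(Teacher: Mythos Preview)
Your argument is correct and is precisely the standard Rankin-trick proof of this estimate. The paper does not supply its own argument here: it simply cites \cite{HT}, Exercise~04, and your proposal is exactly the intended solution to that exercise. One small remark: for the first inequality you should note explicitly (as you did for the second) that the choice $z=\alpha$ stays in the compact set $[0,1]$, so the implied constant coming from $S_z(x)\ll x(\log x)^{z-1}$ is absolute, matching the lemma's unsubscripted $\ll$.
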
 

\begin{proof}
	See \cite{HT}, Exercise 04.
\end{proof}

Let $\pi_l(x)=\#\{n\leq x: \o(n)=l \}$ and $\pi_l^*(x)=\#\{n\leq x: \o(n)=l \mbox{ and $n$ is square-free} \}$.

\begin{lem}\label{lem2.2}
Let $B>A>0$ be fixed. Then, for $x\geq3$ and $A\log\log x \leq l\leq B\log\log x$, 
$$
\pi^*_l(x) \asymp \pi_l(x) \asymp \frac{x}{\log x}\frac{(\log\log x)^{l-1}}{(l-1)!}.  
$$
\end{lem}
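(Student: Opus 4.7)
The strategy is to bound $\pi_l(x)$ from above and $\pi_l^*(x)$ from below; the trivial inequality $\pi_l^*(x)\leq \pi_l(x)$ then chains these into the asserted $\pi_l^*(x)\asymp \pi_l(x)\asymp \frac{x}{\log x}\frac{(\log\log x)^{l-1}}{(l-1)!}$.

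For the upper bound on $\pi_l(x)$, I would apply the classical Hardy--Ramanujan inequality (see, e.g., \cite{Ten}, Theorem II.6.5, or the original \cite{HR}): for all $l\geq 1$ and $x\geq 3$,
$$
\pi_l(x)\ll \frac{x}{\log x}\cdot\frac{(\log\log x+c_1)^{l-1}}{(l-1)!},
$$
with some absolute constant $c_1>0$. In the range $l\leq B\log\log x$ the multiplicative correction $(1+c_1/\log\log x)^{l-1}\leq \exp(c_1 B)$ is $O_B(1)$, so this yields the upper bound of the required order, and in particular also bounds $\pi_l^*(x)\leq \pi_l(x)$ from above.

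For the lower bound on $\pi_l^*(x)$, the cleanest route is to invoke the Sathe--Selberg theorem together with its square-free variant (see \cite{Ten}, Theorem II.6.6): uniformly for $1\leq l\leq (2-\d)\log\log x$,
$$
\pi_l^*(x)=H\!\left(\frac{l-1}{\log\log x}\right)\cdot\frac{x}{\log x}\frac{(\log\log x)^{l-1}}{(l-1)!}\bigl(1+o(1)\bigr),
$$
where $H$ is a positive continuous function on $[0,2)$. Provided $B<2$ (the regime relevant to the applications in this paper), the argument $(l-1)/\log\log x$ lies in the compact subinterval $[A-o(1),B]$, where $H$ is bounded below by a positive constant depending only on $A$ and $B$; this gives the desired lower bound for $\pi_l^*(x)$ and, by monotonicity, also for $\pi_l(x)$. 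If one prefers a self-contained argument, the lower bound can instead be extracted by choosing $l$ distinct primes $p_1<\dots<p_l$ from a suitable dyadic interval subject to $p_1\cdots p_l\leq x$, estimating the number of such tuples via Mertens' theorem $\sum_{p\leq y}1/p=\log\log y+O(1)$, and invoking Stirling's formula to recover the factor $(\log\log x)^{l-1}/(l-1)!$.

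The main obstacle is the lower bound: the Hardy--Ramanujan upper estimate is a routine sieve computation, but the matching lower bound must simultaneously enforce square-freeness, respect the constraint $p_1\cdots p_l\leq x$, and avoid losing any factor depending on $l$ (which would be fatal, since $l\asymp \log\log x$ makes $(l-1)!$ super-polynomially large). Invoking Sathe--Selberg sidesteps this bookkeeping and is perfectly adequate, since Lemma~\ref{lem2.2} only claims the order of magnitude $\asymp$ and not an asymptotic formula.
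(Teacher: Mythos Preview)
Your approach is essentially the paper's: both rely on the Sathe--Selberg asymptotic. The paper quotes Tenenbaum's Theorem~II.6.4 for $\pi_l(x)$ and then derives the square-free analogue by running the Selberg--Delange method on $\sum_n \mu^2(n)z^{\omega(n)}n^{-s}$, obtaining the same shape with an entire weight function $\lambda^*(z)=\Gamma(z+1)^{-1}\prod_p(1+z/p)(1-1/p)^z$. Since $\lambda$ and $\lambda^*$ are entire and positive on $(0,\infty)$, the conclusion holds for \emph{every} fixed $B>0$, not just $B<2$.

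Your restriction $B<2$ is therefore spurious: the barrier at $2$ comes from the pole of $\prod_p(1-z/p)^{-1}$ in the $\Omega$-problem, and is absent for both $\omega$ and the square-free count. As written, your argument proves the lemma only for $B<2$; this happens to cover the application in the paper (where $A=0.1$, $B=1.1$), but not the lemma as stated. Dropping the $(2-\delta)$ caveat and citing the correct version of Sathe--Selberg (Tenenbaum II.6.4, or the Selberg--Delange theorem directly) closes the gap.
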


\begin{proof} Theorem II.6.4 of \cite{Ten} asserts that, for any $B>0$, $x\geq3$, and $1\leq l\leq B\log\log x$, we have (see the formula (6.18))
\begin{equation}\label{2.1}
\pi_l(x)=\frac{x}{\log x}\frac{(\log\log x)^{l-1}}{(l-1)!}\left\{\lambda\left(\frac{l-1}{\log\log x}\right)+O\left(\frac{l}{(\log\log x)^2}\right)\right\},
\end{equation}
where
$$
\lambda(z)=\frac{1}{\Gamma(z+1)}\prod_p\left(1+\frac{z}{p-1}\right)\left(1-\frac1p\right)^z
$$
is an entire function. Thus, if $A\log\log x\leq l\leq B\log\log x$, then $\lambda\left(\frac{l-1}{\log\log x}\right)\asymp 1$, and the claim for $\pi_l(x)$ follows.

The bounds for $\pi^*_l(x)$ follow from the analogue of (\ref{2.1}) for $\pi_l^*$, which can be proved similarly to Theorem II.6.4 in the book \cite{Ten}: starting with the function $$
\sum_{n=1}^{\infty}\frac{\mu^2(n)z^{\o(n)}}{n^s}
$$ 
(here $\mu$ is the M\"obius function) instead of 
$$
\sum_{n=1}^{\infty}\frac{z^{\o(n)}}{n^s},
$$ 
and applying Theorems 5.2 and 6.3 of \cite{Ten}, we get
$$
\pi^*_l(x)=\frac{x}{\log x}\frac{(\log\log x)^{l-1}}{(l-1)!}\left\{\lambda^*\left(\frac{l-1}{\log\log x}\right)+O\left(\frac{l}{(\log\log x)^2}\right)\right\},
$$
where
$$
\lambda^*(z)=\frac{1}{\Gamma(z+1)}\prod_p\left(1+\frac{z}{p}\right)\left(1-\frac1p\right)^z
$$
is another entire function. Again, if $A\log\log x\leq l\leq B\log\log x$, then $\lambda^*\left(\frac{l-1}{\log\log x}\right)\asymp 1$. This concludes the proof of the lemma.

\end{proof}

Now we are ready to prove Theorem \ref{th1.1}. We may assume that $x$ is large enough. We first prove the upper bound. For each representable $n\leq x$, we fix a $k$ with $k\tau(k)=n$; clearly, for any such $k$ we have $k2^{\o(k)}\leq k\tau(k)\leq x$. Thus
\begin{equation}\label{2.2}
N^{\times}_{\tau}(x) \leq \sum_{l\geq1} \pi_l(x/2^l)+1.
\end{equation}
Note that this sum is finite, since $\omega(k)\leq (1+o(1))\frac{\log x}{\log\log x}$ for any $k\leq x$. Let $y=\log\log x$. Lemma \ref{lem2.1} implies that
$$
\sum_{l\leq 0.1y}\pi_l(x/2^l) \leq \#\{k\leq x: \o(k)\leq 0.1y\} \leq x(\log x)^{-Q(0.1)} \ll \frac{x}{(\log x)^{0.6}},
$$
and we have
$$
\sum_{l\geq y}\pi_l(x/2^l) \leq \sum_{l\geq y}x/2^l \ll \frac{x}{2^y}\leq \frac{x}{(\log x)^{0.6}}.
$$
Using these two estimates and Lemma \ref{lem2.2} with $A=0.1$ and $B=1.1$, we get from (\ref{2.2}) 
\begin{multline*}
N^{\times}_{\tau}(x) \leq  \sum_{0.1y\leq l \leq y}\pi_l(x/2^l)+O\left(\frac{x}{(\log x)^{0.6}}\right) \ll \frac{x}{\log x}\sum_{0.1y \leq l\leq 
y} \frac{y^{l-1}}{2^l(l-1)!}+O\left(\frac{x}{(\log x)^{0.6}}\right)
\\=\frac{x}{2(\log x)^{1/2}}+O\left(\frac{x}{(\log x)^{1/2}}\sum_{|l-y/2|\geq 0.4y}\frac{(y/2)^{l}e^{-y/2}}{l!} + \frac{x}{(\log x)^{0.6}}\right),
\end{multline*}
since $y^{l-1}/(l-1)!\asymp y^l/l!$ for any $l\asymp y$. It is well-known (see, for example, (0.23) and (0.24) in \cite{HT}) that, for a Poisson random variable $\xi$ with parameter $y_0$, we have $\P(\xi\leq \a y_0)\leq e^{-Q(\a)y_0}$ for any $0\leq \a\leq 1$, and $\P(\xi \geq \a y_0)\leq e^{-Q(\a)y_0}$ for any $\a\geq 1$ (with $Q(\a)$ defined in Lemma \ref{lem2.1}). Using this with $y_0=y/2$, we get 
$$
\sum_{|l-y/2|\geq 0.4y}\frac{(y/2)^le^{-y/2}}{l!} \leq e^{-Q(0.2)y/2}+e^{-Q(1.8)y/2} \ll (\log x)^{-0.1},
$$
and the required upper bound for $N^{\times}_{\tau}(x)$ follows from the previous estimate.

To prove the lower bound, we note that if $k_1$ and $k_2$ are two distinct square-free numbers, then $k_1\tau(k_1)$ and $k_2\tau(k_2)$ are also distinct. Using Lemma \ref{lem2.2} and arguing as above, we have
$$
N^{\times}_{\tau}(x)\geq \sum_{l\geq1}\pi^*_l(x/2^l) \gg \frac{x}{\log x}\sum_{0.1y \leq l\leq y} \frac{y^l}{2^ll!} \gg \frac{x}{(\log x)^{1/2}}.
$$ 
This completes the proof of Theorem \ref{th1.1}.

\section{Proof of Theorem 1.2}\label{s3}

We need the following classical estimate.

\begin{lem}\label{lem3.1}
For any $0\leq \psi\leq \sqrt{\log\log x}$, we have
$$
\#\{k\leq x: |\o(k)-\log\log x|> \psi\sqrt{\log\log x}\} \ll xe^{-\frac13\psi^2}.
$$	
\end{lem}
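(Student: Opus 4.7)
My plan is to reduce Lemma~\ref{lem3.1} directly to the Hardy--Ramanujan type tail bound in Lemma~\ref{lem2.1}, whose exponent is the rate function $Q(\alpha)=\alpha\log\alpha-\alpha+1$. Set $L=\log\log x$. For $\psi=0$ the asserted inequality is trivial (the left-hand side is $\le x$ and the right-hand side equals $x$), so I may assume $0<\psi\le\sqrt{L}$.

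First I split the exceptional set into the two one-sided tails
\[
\{k\le x:\omega(k)\ge\alpha_+ L\} \quad\text{and}\quad \{k\le x:\omega(k)\le\alpha_- L\},
\]
with $\alpha_\pm=1\pm\psi/\sqrt{L}$. The hypothesis $\psi\le\sqrt{L}$ ensures that $\alpha_+\in(1,2]$ and $\alpha_-\in[0,1)$, so the two halves of Lemma~\ref{lem2.1} (applied with $\alpha_0=2$) give
\[
\#\{k\le x:|\omega(k)-L|>\psi\sqrt{L}\}\ll x(\log x)^{-Q(\alpha_+)}+x(\log x)^{-Q(\alpha_-)}.
\]

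It remains to bound $Q(\alpha_\pm)$ from below. Since $Q(1)=0$, $Q'(1)=\log 1=0$, and $Q''(\alpha)=1/\alpha$, a Taylor expansion gives $Q(1+\delta)=\delta^2/2+O(\delta^3)$ near $\delta=0$. What I actually need is the global estimate $Q(1+\delta)\ge\delta^2/3$ on $\delta\in[-1,1]$; this follows from an elementary analysis of $f(\delta)=Q(1+\delta)-\delta^2/3$, whose derivative $\log(1+\delta)-2\delta/3$ is easily seen to have the same sign as $\delta$ throughout $(-1,1]$ (vanishing only at $\delta=0$, positive at $\delta=1/2$ and $\delta=1$), combined with the endpoint checks $Q(0)=1>1/3$ and $Q(2)=2\log 2-1>1/3$. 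Applied with $\delta=\pm\psi/\sqrt{L}$, this yields $Q(\alpha_\pm)\ge\psi^2/(3L)$, so $(\log x)^{-Q(\alpha_\pm)}\le e^{-\psi^2/3}$, and adding the two contributions proves the lemma.

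The only (mild) obstacle is precisely this global inequality for $Q$: the Gaussian-looking local estimate $Q(1+\delta)\approx\delta^2/2$ is not enough on its own, and the constant $1/3$ in the exponent on the right-hand side of Lemma~\ref{lem3.1} is chosen exactly so that the bound remains valid uniformly up to $|\delta|=1$, which corresponds to the extremal case $\psi=\sqrt{L}$ permitted by the hypothesis.
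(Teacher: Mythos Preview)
Your proof is correct and follows exactly the same route as the paper: split into the two tails at $\alpha_\pm=1\pm\psi/\sqrt{\log\log x}$, apply Lemma~\ref{lem2.1}, and use the elementary inequality $Q(1+\delta)\ge\delta^2/3$ for $|\delta|\le 1$. The paper simply asserts this last inequality, while you supply a short calculus verification of it; otherwise the arguments are identical.
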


\begin{proof} For the function $Q(\a)=\a\log\a-\a+1$, we have $|Q(1+\e)|\geq \e^2/3$ whenever $|\e|\leq1$. Now the claim follows from Lemma \ref{lem2.1} applied to $\a=1\pm \psi/\sqrt{\log\log x}$.
\end{proof}

To prove Theorem \ref{th1.2}, let us consider the following set of numbers. We assume that $x$ is large enough, and set\footnote{In this section, we use for brevity the notation $\log_2x=\log\log x$,  $\log_3x=\log\log\log x$, etc.}  $\psi=10(\log_3x)^{1/2}$. We define $K$ to be the set of positive integers $k$ such that
\begin{itemize}
	\item[(i)]  \qquad  $k\o(k)\leq x$;
	\item[(ii)] \qquad  $|\o(k)-\log\log x|\leq \psi(\log\log x)^{1/2}$.
\end{itemize}

\smallskip 

Now we briefly describe the idea of the proof. Firstly, due to Lemma \ref{lem3.1}, most numbers obey (ii), and thus, while counting the representable $n\leq x$, we can restrict our attention to those which are images of $k\in K$. Next, we show that the number of $n\leq x$ having more than one such representation is negligible. Therefore, $N^{\times}_{\omega}(x)\approx |K|$, and it remains to write down the asymptotics for $|K|$, which is $(1+o(1))\frac{x}{\log\log x}$.  
 
We turn to the details. The application of Lemma \ref{lem3.1} gives us
\begin{multline}\label{3.1}
\#\{k: k\o(k)\leq x \mbox { and } k\notin K\} \leq \#\left\{k\leq x: k \mbox{ violates (ii)} \right\} \\
\ll x\exp(-\psi^2/3) 
\ll \frac{x}{(\log\log x)^2}.
\end{multline}
Therefore,
\begin{equation}\label{3.2}
N^{\times}_{\omega}(x)=\#\{n\leq x: n=k\o(k) \mbox{ for some } k\in K\} + O\left(\frac{x}{(\log\log x)^2}\right).
\end{equation}

Let us call a number $n\leq x$ \textit{bad} if $n=k\o(k)=k'\o(k')$ for some distinct $k, k'\in K$; clearly, in this case $\o(k)$ and $\o(k')$ are distinct as well. Suppose we are given a bad $n$. Without less of generality, we may assume that
\begin{equation}\label{3.3}
\o(\o(k))\geq \o(\o(k')).
\end{equation}
Let $d=(k,k')$ and $t=(\o(k),\o(k'))$. The equality
$$
\frac{k\o(k)}{dt}=\frac{k'\o(k')}{dt}
$$
implies that 
$$
k=d\frac{\o(k')}{t}, \quad k'=d\frac{\o(k)}{t};
$$
therefore,
$$
\o(k)\leq \o(d)+\o(\o(k')) \leq \o(k')+\o(\o(k))
$$
and, similarly,
$$
\o(k') \leq \o(k)+\o(\o(k')).
$$
So, setting $u=\o(k')-\o(k)$, by (\ref{3.3}), (ii), and the bound $\o(m)\ll \log m$ (say) for all $m\in \N$, we have
\begin{equation}\label{3.4} 
0<|u|\leq \o(\o(k))\ll \log_3 x.	
\end{equation}
Note also that $t|u$ and, hence, $t\leq \o(\o(k))$. Let $w=\o(k)$. Since
$$
n=\frac{d\o(k')\o(k)}{t}=\frac{dw(w+u)}{t},
$$
we see that the number of bad $n$ does not exceed the number of the four-tuples $(w,t,d,u)$ under consideration. Let us fix $w$ and $t$. Then (ii) and (\ref{3.4}) implies that $w+u\asymp w$. Therefore, there are at most
$$
\frac{xt}{w(w+u)} \ll \frac{xt}{w^2} \ll \frac{xt}{(\log_2x)^2}
$$  
possible values of $d$. Further, by (\ref{3.4}) there are 
$$
\ll \frac{\o(w)}{t}
$$
possible values of $u$. Finally, there are at most $\max_{1\leq u\leq \o(w)} \tau(u) \leq \o(w)$ options for $t$ for any fixed $w$. Combining all of this, we see that the number of bad $n$ does not exceed
$$
\frac{x}{(\log_2x)^2}\sum_{a\leq w\leq b}\o^2(w), 
$$
where $a=\log_2x-\psi(\log_2x)^{1/2}$ and $b=\log_2x+\psi(\log_2x)^{1/2}$. Since 
$$
\o(w)=\sum_{p|w: p\leq b^{1/10}} 1 + O(1)
$$ 
for any $w\in[a,b]$, and $b-a>b^{1/5}$, we find that
\begin{multline*}
\sum_{a\leq w\leq b}\o^2(w)	= \sum_{p,q \leq b^{1/10}: \, p\neq q}\frac{b-a}{pq}+O\left(\sum_{p\leq b^{1/10}}\frac{b-a}{p}+(b-a)\right) \\
\ll (b-a)(\log_4x)^2\ll \psi(\log_2x)^{1/2}(\log_4x)^2.
\end{multline*}
Thus, the number of bad $n$ is  
$$
\ll \frac{x(\log_3x)^{1/2}(\log_4x)^2}{(\log_2x)^{3/2}}.
$$
Now it follows from (\ref{3.2}) that
\begin{equation}\label{3.5}
N_{\o}^{\times}(x)=|K|+O\left(\frac{x(\log_3x)^{1/2}(\log_4x)^2}{(\log_2x)^{3/2}}\right).	
\end{equation}

Finally, we work with $|K|$. By (i) and (ii), any $k\in K$ does not exceed
$$
\frac{x}{\log_2 x-\psi(\log_2 x)^{1/2}}=\frac{x}{\log_2 x}+O\left(\frac{x(\log_3x)^{1/2}}{(\log_2 x)^{3/2}}\right),
$$
and thus
$$
|K|\leq \frac{x}{\log_2 x}+O\left(\frac{x(\log_3x)^{1/2}}{(\log_2 x)^{3/2}}\right).
$$
On the other hand, any $k$ not exceeding 
$$
\frac{x}{\log_2 x+\psi(\log_2 x)^{1/2}}=\frac{x}{\log_2 x}-O\left(\frac{x(\log_3x)^{1/2}}{(\log_2 x)^{3/2}}\right)
$$
and obeying (ii), belongs to $K$. Using the bound (\ref{3.1}), we find that
$$
|K|\geq  \frac{x}{\log_2 x}-O\left(\frac{x(\log_3x)^{1/2}}{(\log_2 x)^{3/2}}\right).
$$
So
$$
|K|=\frac{x}{\log_2 x}+O\left(\frac{x(\log_3x)^{1/2}}{(\log_2 x)^{3/2}}\right),
$$
and now Theorem \ref{th1.2} follows from (\ref{3.5}).

\section{Proof of Theorem 1.3}\label{s4}

We first note that the map $k\mapsto k\varphi(k)$ is an injection. Indeed, let $n=k\varphi(k)=l\varphi(l)$ and $p=P^+(n)$; then it is easy to see that $P^+(k)=P^+(l)=p$ and $p$ occurs in $k$ and $l$ in the same power, say, $\a$. Thus we can divide the equality $k\varphi(k)=l\varphi(l)$ by $p^{2\a-1}(p-1)$ and get $k'\varphi(k')=l'\varphi(l')$, where $k'=k/p^{\a}$ and $l'=l/p^{\a}$ are coprime to $p$. Arguing in the same manner, we obtain $k=l$ after a finite number of steps. 

Now we consider the function 
\begin{equation*}
F(s)=\prod_{p}\left(1+\frac{1}{(p\varphi(p))^s}+\frac{1}{(p^2\varphi(p^2))^s}+\frac{1}{(p^3\varphi(p^3))^s}+\ldots \right);
\end{equation*}
since $\varphi(p^{\a})\asymp p^\a$, we see that $F(s)$ absolutely converges in $\R s>1/2$. Denote 
$$
A=\{n\in\N: n=k\varphi(k) \mbox { for some } k\}.
$$ 
Since any $n$ has at most one such representation, we have 
$$
F(s)=\sum_{n=1}^{\infty}\frac{\mathbb{I}(n\in A)}{n^s}.
$$
Further,
\begin{multline}\label{4.1}
F(s)=\prod_p\left(1+\frac{1}{(p-1)^s}\left(\frac{1}{p^s}+\frac{1}{p^{3s}}+
\frac{1}{p^{5s}}+\ldots\right)\right)\\
=\prod_p\left(1+\frac{1}{(p-1)^sp^s(1-p^{-2s})}\right)=\zeta(2s)G(s),
\end{multline}
where $\zeta(s)$ is the Riemann zeta-function and
\begin{equation}\label{4.2}
G(s)=\prod_p\left(1+\frac{p^s-(p-1)^s}{p^{2s}(p-1)^s}\right).
\end{equation}
Since $p^s-(p-1)^s=s\int_{p-1}^pu^{s-1}du$, for any $s$ with $\R s=\sigma>0$ we have $$
\left|\frac{p^s-(p-1)^s}{p^{2s}(p-1)^s}\right|\ll |s|p^{-(1+2\sigma)}.
$$ 
Thus, $G(s)$ is analytic in $\R s>0$.

We use Perron's formula to find the asymptotics of $\int_1^xN^{\times}_{\varphi}(u)du=\int_1^x\#\{n\leq u: n\in A\}du$, which will imply the asymptotics for $N^{\times}_{\varphi}(x)$.

\begin{lem}[Perron's formula; see \cite{KV}, Appendix, \S5, Theorem 2]\label{lem4.1}
Let 
$$
F(s)=\sum_{n=1}^{\infty}\frac{a(n)}{n^s}
$$ 
be a Dirichlet series which absolutely converges in $\R s >a_0\geq0$ and $A(u)=\sum_{n\leq u}a(n)$. For $b>a_0$, define 	
$$
B(b)=\int_1^{\infty}\frac{|A(u)|}{u^{b+1}}du.
$$
Then, for all $x\geq2$ and $T\geq2$, 
$$
\int_1^xA(u)du=\frac{1}{2\pi i}\int_{b-iT}^{b+iT}\frac{F(s)x^{s+1}}{s(s+1)}ds+R(x),
$$
where
$$
R(x) \ll B(b)\frac{x^{b+1}}{T}+2^b\left(\frac{x\log x}{T}+\log T\right)\max_{x/2\leq u\leq 3x/2}|A(u)|.
$$
and the implied constant is absolute.
\end{lem}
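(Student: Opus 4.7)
The plan is to convert the truncated contour integral into an integral of $A(u)$ against a simple kernel and then reduce to the classical unsmoothed truncated-Perron estimate. The starting point is the Abel-summation identity
$$
\frac{F(s)}{s} \;=\; \int_1^\infty A(u)\, u^{-s-1}\, du \qquad (\R s > a_0),
$$
valid because absolute convergence of $F$ forces $A(u) \ll u^{a_0+\varepsilon}$. Multiplying by $x^{s+1}/(s+1)$ and swapping the $s$- and $u$-integrals (Fubini applies on $\R s = b$ by absolute convergence) gives
$$
\frac{1}{2\pi i}\int_{b-iT}^{b+iT} \frac{F(s)\,x^{s+1}}{s(s+1)}\,ds \;=\; \int_1^\infty A(u)\, J(x/u,T)\, du,
$$
where, after the shift $s' = s+1$,
$$
J(y,T) \;=\; \frac{1}{2\pi i}\int_{b-iT}^{b+iT} \frac{y^{s+1}}{s+1}\, ds \;=\; \frac{1}{2\pi i}\int_{c-iT}^{c+iT} \frac{y^{s'}}{s'}\, ds' \qquad (c = b+1)
$$
is the classical unsmoothed truncated-Perron kernel.

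I would then invoke (or briefly re-derive, by closing the contour to the left for $y > 1$ and to the right for $y < 1$, picking up the residue at $s' = 0$ and bounding the horizontal pieces via $|s'| \ge T$) the standard estimate
$$
J(y,T) \;=\; \mathbf{1}\{y > 1\} + E(y,T), \qquad |E(y,T)| \;\ll\; y^{c}\min\!\Bigl(1,\, \tfrac{1}{T|\log y|}\Bigr).
$$
Since $\int_1^\infty A(u)\,\mathbf{1}\{x/u > 1\}\, du = \int_1^x A(u)\,du$, the main term matches the left-hand side of the claim exactly, and everything else is absorbed into
$$
R(x) \;=\; \int_1^\infty A(u)\, E(x/u,T)\, du.
$$

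To bound $R(x)$ I would split the $u$-axis. On the outer range $u \in [1,x/2] \cup [3x/2,\infty)$ one has $|\log(x/u)| \ge \log(3/2)$, so $|E(x/u,T)| \ll (x/u)^{b+1}/T$ and the contribution is
$$
\ll \frac{x^{b+1}}{T}\int_1^\infty \frac{|A(u)|}{u^{b+1}}\,du \;=\; \frac{B(b)\,x^{b+1}}{T}.
$$
On the central range $u \in [x/2,3x/2]$ the factor $(x/u)^{b+1}$ is at most $2^{b+1}$ and $|A(u)| \le M := \max_{x/2\le u\le 3x/2}|A(u)|$. I would split this range further into a near window $|u - x| \le x/T$ (use the trivial bound $|E| \ll (x/u)^{b+1}$ on an interval of length $O(x/T)$; contribution $\ll 2^b M x/T$) and a far piece $x/T < |u - x| \le x/2$ (use $|\log(x/u)| \asymp |u - x|/x$ so the kernel is $\asymp x/(T|u-x|)$ and $\int dv/v$ produces $\log T$; contribution $\ll 2^b M x\log T/T$). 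Summing fits inside the stated bound $2^b(x\log x/T + \log T)\,M$.

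The main technical obstacle is the near-singularity of the Perron kernel $1/|\log(x/u)|$ at $u = x$: without the two-step split on the central range, the naive estimate would inflate the $\max|A(u)|$ factor into a sum of the form $\sum_n |a(n)|/n^b$, which $B(b)$ does not control when $a(n)$ is signed. The remaining technical points---Fubini for the swap of integrals, the decay bound $A(u) \ll u^{a_0+\varepsilon}$ underlying the Mellin identity, and convergence of the horizontal/far-vertical segments in the Perron estimate for $J$---are all routine given $b > a_0 \ge 0$ and $T \ge 2$.
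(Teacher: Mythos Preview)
The paper does not prove this lemma; it is quoted from \cite{KV} (Appendix, \S5, Theorem~2) as a black box. So there is no ``paper's own proof'' to compare against.

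Your sketch is correct and is the standard derivation: write $F(s)/s$ as a Mellin transform of $A(u)$, swap the finite $s$-integral with the $u$-integral (legitimate since $B(b)<\infty$), recognise the truncated Perron kernel $J(y,T)$, and then split $u\in[1,\infty)$ into an outer range where $|\log(x/u)|\gg 1$ and a central range $[x/2,3x/2]$, the latter further split at $|u-x|=x/T$. Each piece is bounded as you indicate, and your central-range contribution $\ll 2^b M\,x(\log T)/T$ indeed sits inside the stated error $2^b M\,(x\log x/T+\log T)$ (use the first term when $T\le x$ and the second when $T>x$). One small point worth making explicit for a full write-up: the ``trivial'' bound $|E(y,T)|\ll y^{c}$ that you use on the near window $|u-x|\le x/T$ is not immediate from the definition of $J$ (a crude estimate of the vertical integral only gives $y^{c}\log T$); it requires the usual contour-shift argument for the Perron kernel, so you should cite or include that alongside the $y^{c}/(T|\log y|)$ bound.
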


We apply this for $a(n)=\mathbb{I}(n\in A)$ (so $A(x)=N^{\times}_{\varphi}(x)$), $a_0=1/2$, large enough $x$, and $b=1/2+1/\log x$; we also assume that $10\leq T\leq x$. Let us estimate the error term $R(x)$. By (\ref{1.1}), we have $A(u)\ll u^{1/2}$ and hence,
$$
B(b)\ll \int_1^{\infty}\frac{du}{u^{b+1/2}} \ll \log x
$$
and $R(x)\ll \frac{x^{3/2}\log x}{T}$. Thus, by Lemma \ref{lem4.1} and (\ref{4.1}),
\begin{equation}\label{4.3}
\int_1^xA(u)du=\frac{1}{2\pi i}\int_{b-iT}^{b+iT}\frac{\zeta(2s)G(s)x^{s+1}}{s(s+1)}ds+O\left(\frac{x^{3/2}\log x}{T}\right).
\end{equation}

We compute this integral using Cauchy's theorem. Setting
$$
a=\frac12-\frac{\log\log T}{5\log T},
$$ 
consider the contour $\Gamma=\Gamma_1\cup\Gamma_2\cup\Gamma_3\cup\Gamma_4$, where $\Gamma_1=[b+iT,a+iT]$,  $\Gamma_2=[a+iT,a-iT]$, $\Gamma_3=[a-iT,b-iT]$,  $\Gamma_4=[b-iT,b+iT]$. We also write 
$$
I_i=\int_{\Gamma_i}\frac{\zeta(2s)G(s)x^{s+1}}{s(s+1)}ds
$$ 
for $1\leq i\leq 4$. It is easy to see that the integrand has one simple pole at the point $s=1/2$ and, since 
$$
\zeta(2s)=\frac{1}{2(s-1/2)}(1+o(1))
$$ 
as $s\to1/2$, we have 
$$
\int_{\Gamma}\frac{\zeta(2s)G(s)x^{s+1}}{s(s+1)}ds = \res_{s=1/2}\frac{\zeta(2s)G(s)x^{s+1}}{s(s+1)} = \frac{G(1/2)x^{3/2}}{3/2}=\frac23G(1/2)x^{3/2}.
$$
Hence, (\ref{4.3}) implies 
\begin{equation}\label{4.4}
\int_1^xA(u)du=\frac23G(1/2)x^{3/2}+O\left(|I_1|+|I_2|+|I_3|+\frac{x^{3/2}\log x}{T}\right).
\end{equation}
Now we estimate the integrals in the error term. 
Firstly, we estimate the function $G(s)$ for $s\in \Gamma_1\cup\Gamma_2\cup\Gamma_3$ with $|t|\geq2$. Since 
$$
p^s-(p-1)^s=s\int_{p-1}^pu^{s-1}ds,
$$ 
for any $s$ with $\sigma=\R s\in(0,1)$ we have
$$
|p^s-(p-1)^s|\leq \min\{2p^{\sigma}, |s|(p-1)^{\sigma-1} \}.
$$
Clearly, the first bound is better iff $p\ll |s|\asymp |t|$. Thus, from the definition (\ref{4.2}) of $G(s)$ we get
$$
|G(s)|\leq \prod_{p\leq |t|}\left(1+O(p^{-2\sigma})\right) \prod_{p>|t|}\left(1+O\left(|t|p^{-1-2\sigma}\right)\right),
$$
and, since $\sigma\geq a=1/2-\log\log T/(5\log T)$,
$$
\log |G(s)| \ll \sum_{p\leq |t|}p^{-2a} + |t|\sum_{p>|t|}p^{-1-2a}.
$$
Since $|t|\leq T$, for any $p\leq |t|$ we get $p^{-2a}\leq p^{-1}(\log T)^{0.4}$. Thus,
$$
\log |G(s)| \ll (\log T)^{0.4}\sum_{p\leq |t|}p^{-1} + |t|^{1-2a} \ll (\log T)^{1/2}
$$
and, hence,
$$
|G(s)| \leq \exp(O((\log T)^{1/2}))
$$
for any $s\in \Gamma_1\cup\Gamma_2\cup\Gamma_3$.  We also need the following well-known bound for the Riemann zeta-function [see \cite{KV}, Theorem 2 of Chapter IV]: for small enough positive $\gamma_1$ and $\sigma\geq 1-\gamma_1/(\log|t|)^{2/3}$, $|t|\geq2$, we have 
$$
\zeta(\sigma+it)=O\left(\log^{2/3}|t|\right).
$$

Using these bounds, we get 
$$
\max\{|I_1|, |I_3|\} \ll \int_a^b\frac{(\log T)^{2/3}\exp(O((\log T)^{1/2}))x^{1+\sigma}}{T^2}d\sigma \ll \frac{x^{3/2}}{T},
$$
and
$$
|I_2| \ll x^{1+a}e^{O((\log T)^{1/2})}\left(\int_2^{T}\frac{(\log t)^{2/3}}{t^2}dt +O(1)\right) \ll x^{3/2-\log\log T/(5\log T)}e^{O((\log T)^{1/2})}.
$$
So, we have from (\ref{4.4})
$$
\int_1^xA(u)du = \frac23G(1/2)x^{3/2}+O\left(\frac{x^{3/2}\log x}{T} + x^{3/2-\log\log T/(5\log T)} e^{O((\log T)^{1/2})}\right).
$$
Choosing $T=\exp(c(\log x \log\log x)^{1/2})$ for some $c>0$, we have
\begin{equation}\label{4.5} 
\int_1^xA(u)du = \frac23G(1/2)x^{3/2}+O\left(x^{3/2}\exp(-c_1(\log x \log\log x)^{1/2}) \right)
\end{equation}
for some $c_1>0$, and simple calculations show that 
$$
G(1/2)=\prod_p\left(1+\frac{1}{p(p-1+\sqrt{p^2-p})}\right)=1.365...
$$

Now we complete the proof by the standard ``differentiation'' of the above asymptotic formula. Let $1\leq h\leq x/2$. Then		
$$
\int_{x-h}^xA(u)du \leq hA(x) \leq \int_x^{x+h}A(u)du.
$$
On the other hand, (\ref{4.5}) implies
$$
\int_x^{x+h}A(u)du=\frac23G(1/2)\left((x+h)^{3/2}-x^{3/2}\right)+O\left(x^{3/2}\exp(-c_1(\log x \log\log x)^{1/2})\right),
$$
and it is easy to see that
$$
(x+h)^{3/2}-x^{3/2}=x^{3/2}\left(\frac{3h}{2x}+O\left(\frac{h^2}{x^2}\right)\right)=\frac32x^{1/2}h+O\left(\frac{h^2}{x}\right).
$$
The last three estimates yield
$$
A(x)\leq G(1/2)x^{1/2}+O\left(\frac{h}{x^{1/2}}+\frac{x^{3/2}\exp(-c_1(\log x \log\log x)^{1/2})}{h}\right).
$$
Now we choose $h=x\exp(-0.5c_1(\log x \log\log x)^{1/2})$ and obtain
$$
A(x)\leq G(1/2)x^{1/2}+O\left(x^{1/2}\exp(-0.5c_1(\log x \log\log x)^{1/2})\right).
$$
Similarly, one can show that
$$
A(x)\geq G(1/2)x^{1/2}+O\left(x^{1/2}\exp(-0.5c_1(\log x \log\log x)^{1/2})\right).
$$
This completes the proof.

\end{document}